\providecommand{\tabularnewline}{\\}
\newcommand{\cost}{\text{cost}}
\newcommand{\dist}{\text{dist}}
\newcommand{\pr}[1]{\mathbb{P}\left[ #1 \right]}
\newtheorem{theorem}{Theorem}[section]
\newtheorem{lemma}{Lemma}[section]
\newtheorem{corollary}{Corollary}[section]
\begin{document}

\title{Anarchy is free in network creation}

\author{Ronald Graham%
\thanks{Department of Computer Science and Engineering, University of California,
San Diego, La Jolla, CA 92093. E-mail: \texttt{graham@ucsd.edu}.%
} \and Linus Hamilton%
\thanks{Department of Mathematical Sciences, Carnegie Mellon University, Pittsburgh,
PA 15213. E-mail: \texttt{luh@andrew.cmu.edu}. Research supported
by NSF grant DMS-1041500.%
} \and Ariel Levavi%
\thanks{Department of Computer Science and Engineering, University of California,
San Diego, La Jolla, CA 92093. E-mail: \texttt{alevavi@cs.ucsd.edu}.%
} \and Po-Shen Loh%
\thanks{Department of Mathematical Sciences, Carnegie Mellon University, Pittsburgh,
PA 15213. E-mail: \texttt{ploh@cmu.edu}. Research supported by NSF
grants DMS-1201380 and DMS-1041500, an NSA Young Investigators Grant
and a USA-Israel BSF Grant.%
} }
\maketitle
\begin{abstract}
The Internet has emerged as perhaps the most important network in
modern computing, but rather miraculously, it was created through
the individual actions of a multitude of agents rather than by a central
planning authority. This motivates the game theoretic study of network
formation, and our paper considers one of the most-well studied models,
originally proposed by Fabrikant et al. In it, each of $n$ agents
corresponds to a vertex, which can create edges to other vertices
at a cost of $\alpha$ each, for some parameter $\alpha$. Every edge
can be freely used by every vertex, regardless of who paid the creation
cost. To reflect the desire to be close to other vertices, each agent's
cost function is further augmented by the sum total of all (graph
theoretic) distances to all other vertices.

Previous research proved that for many regimes of the $(\alpha,n)$
parameter space, the total social cost (sum of all agents' costs)
of every Nash equilibrium is bounded by at most a constant multiple
of the optimal social cost. In algorithmic game theoretic nomenclature,
this approximation ratio is called the price of anarchy. In our paper,
we significantly sharpen some of those results, proving that for all
constant non-integral $\alpha>2$, the price of anarchy is in fact
$1+o(1)$, i.e., not only is it bounded by a constant, but it tends
to 1 as $n\rightarrow\infty$. For constant integral $\alpha\geq2$,
we show that the price of anarchy is bounded away from 1. We provide
quantitative estimates on the rates of convergence for both results. 
\end{abstract}

\section{Introduction}

Networks are of fundamental importance in modern computing, and substantial
research has been invested in network design and optimization. However,
one of the most significant networks, the Internet, was not created
``top-down'' by a central planning authority. Instead, it was constructed
through the cumulative actions of countless agents, many of whom built
connections to optimize their individual objectives. To understand
the dynamics of the resulting system, and to answer the important
question of how much inefficiency is introduced through the selfish
actions of the agents, it is therefore natural to study it through
the lens of game theory.

In this paper, we focus on a well-studied game-theoretic model of
network creation, which was formulated by Fabrikant et al. in \cite{FaLuMaPaSh}.
There are $n$ agents, each corresponding to a vertex. They form a
network by laying down connections (edges) between pairs of vertices.
For this, each agent $v$ has an individual strategy, which consists
of a subset $S_{v}$ of the rest of the vertices that it will connect
to. The resulting network is the disjoint union of all (undirected)
edges between vertices $v$ and vertices in their $S_{v}$. Note that
in this formulation, an edge may appear twice, if $v$ lays a connection
to $w$ and $w$ lays a connection to $v$. Let $\alpha$ be an arbitrary
parameter, which represents the cost of making a connection. In order
to incorporate each agent's desire to be near other vertices, the
total cost to each agent is defined to be: 
\[
\cost(v)=\alpha|S_{v}|+\sum_{w}\dist(v,w)\,,
\]
where the sum is over all vertices in the graph, and $\dist(v,w)$
is the number of edges in the shortest path between $v$ and $w$
in the graph, or infinity if $v$ and $w$ are disconnected. The \emph{social
cost}\/ is defined as the total of the individual costs incurred
by each agent. This cost function summarizes the fact that $v$ must
pay the construction cost for the connections that it initiates, but
$v$ also prefers to be graph-theoretically close to the other nodes
in the network. This model also encapsulates the fact that, just as
in the Internet, once a connection is made, it can be shared by all
agents regardless of who paid the construction cost.

The application of approaches from algorithmic game theory to the
study of networks is not new.  The works \cite{AnDaKlTaWeRo,AnDaTaWe,CzVo,PaSTOC01,RoSTOC02,RoBOOK}
all consider network design issues such as load balancing,
routing, etc. Numerous papers, including
\cite{Al,AlEiEvMaRo,AlDeHaLe,AnFeMa,CoPa,DeHaMaZa} 
and the surveys \cite{Ja,TaWe},
have considered network formation itself,
by formulating and studying network creation games. From a game-theoretic perspective, a (pure) \emph{Nash
equilibrium} is a tuple of deterministic strategies $S_{v}$ (one
per agent) under which no individual agent can strictly reduce its
cost by unilaterally changing its strategy assuming all other agents
maintain their strategies. If every unilateral
deviation strictly increases the deviating agent's cost, then the
Nash equilibrium is \emph{strict}.

To quantify the cumulative losses incurred by the lack of coordination,
the key ratio is called the \emph{price of anarchy}, a term coined
by Koutsoupias and Papadimitriou \cite{KoPa}. It is defined as the
maximum social cost incurred by any Nash equilibrium, divided by the
minimum possible social cost incurred by any tuple of strategies.
Note that the minimizer, also known as the \emph{social optimum},
is not necessarily a Nash equilibrium itself. The central questions
in this area are thus to understand the price of anarchy, and to characterize
the Nash equilibria.

\subsection{Previous work}

To streamline our discussion, we will represent a tuple of strategies
with a directed graph, whose underlying undirected graph is the resulting
network, and where each edge $vw$ is oriented from $v$ to $w$ if
it was constructed by $v$'s strategy ($w\in S_{v}$). This is well-defined
because it is clear that the social optimum and all Nash equilibria
will avoid multiple edges, and so each edge is either not present
at all, or present with a single orientation.

The problem is trivial for $\alpha<1$, because all Nash equilibria
produce complete graphs, as does the social optimum, and therefore
the price of anarchy is 1 in this range. For $\alpha\geq1$, a new
Nash equilibrium arises: the star with all edges oriented away from
the central vertex. Indeed, the central vertex has no incentive to
disconnect any of the edges which it constructed, as its individual
cost function would rise to infinity, and no other vertex has incentive
to add more connections, because a new connection would cost an additional
$\alpha\geq1$, and reduce at most one of the pairwise distances by
1. Yet, as observed in the original paper of Fabrikant et al. \cite{FaLuMaPaSh},
when $\alpha<2$, the social optimum is a clique, and they calculate
the price of anarchy to be $\frac{4}{2+\alpha}+o(1)$, where the error
term tends to 0 as $n\rightarrow\infty$. This ranges from $\frac{4}{3}$
to 1 as $\alpha$ varies in that interval.

For $\alpha\geq2$, the social optimum is the star.  Various bounds on the
price of anarchy were achieved, with particular interest in constant
bounds, which were derived in many ranges of the parameter space. From the
point of view of approximation algorithms, these show that in those ranges
of $\alpha$, the Nash equilibria that arise from the framework of selfish
agents still are able to approximate the optimal social cost to within a
constant factor.  The current best bounds are summarized in Table
\ref{tab:prev-poa}.

\begin{table}[htbp]
\centering %
\begin{tabular}{l|l}
Regime  & Upper bound on price of anarchy \tabularnewline
\hline 
General $\alpha$  & $2^{O(\sqrt{\log n})}$ \tabularnewline
$2\leq\alpha<\sqrt[3]{n/2}$  & 4 \tabularnewline
$\sqrt[3]{n/2}\leq\alpha<\sqrt{n/2}$  & 6 \tabularnewline
$\alpha=O(n^{1-\epsilon})$  & $O(1)$ \tabularnewline
$\alpha\geq12n\lceil\log_{2}n\rceil$  & $O(1)$ \tabularnewline
\end{tabular}\caption{{\small{Previous upper bounds on the price of anarchy. The last bound
above is due to Albers et al. \cite{AlEiEvMaRo}, and the other bounds
are due to Demaine et al. \cite{DeHaMaZa}.}}}

{\small{\label{tab:prev-poa} }}
\end{table}

\subsection{Our contribution}

Much work had been done to achieve constant upper bounds on the price
of anarchy in various regimes of $\alpha$, because those imply the
satisfying conclusion that selfish agents fare at most a constant
factor worse than optimally coordinated agents. Perhaps surprisingly
(or perhaps reassuringly), it turns out that the price of anarchy
is actually $1+o(1)$ for most constant values of $\alpha$. In other
words, the lack of coordination has negligible effect on the social
cost as $n$ grows.

\begin{theorem} For non-integral $\alpha>2$, and $n > \alpha^3$, the
  price of anarchy is at most 
\begin{displaymath}
  1 + \frac{150\alpha^{6}}{(\alpha-\lfloor\alpha\rfloor)^{2}}
  \sqrt{\frac{\log n}{n}}  = 1+o(1) \,.
\end{displaymath}
On the other hand, for each integer $\alpha\geq2$, the price of anarchy
is at least 
\begin{displaymath}
  \frac{3}{2} - \frac{3}{4\alpha} + \frac{1}{\alpha^2} +o(1) \,,
\end{displaymath}
and it is achieved by the following construction. Start with an arbitrary
orientation of the complete graph on $k$ vertices. For each vertex
$v$ of the complete graph, add $\alpha-1$ new vertices, each with
a single edge oriented from $v$. 
\label{thm:main} 
\end{theorem}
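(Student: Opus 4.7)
My plan is to handle the lower and upper bounds separately, since they rely on complementary techniques.

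\textbf{Lower bound.} The construction is given explicitly: take a clique $K_k$ on $k$ vertices with any orientation and attach $\alpha - 1$ pendant leaves to each clique vertex, so that $n = k\alpha$. First, I verify that it is a Nash equilibrium. The critical single-edge deviation is a clique vertex $v$ dropping a clique edge $vu$ that it owns: the edge saving $\alpha$ is exactly matched by the distance-sum increase $1 + (\alpha - 1) = \alpha$, one unit for $\dist(v, u)$ moving from 1 to 2 via another clique vertex, and one unit for each of $u$'s $\alpha - 1$ leaves whose distance from $v$ climbs from 2 to 3. The deviation is exactly break-even, which is permitted. Dropping a leaf edge disconnects a vertex (infinite cost); all other unilateral deviations --- multi-edge deletions, edge additions, and mixed swaps --- reduce to short case analyses yielding at worst break-even outcomes. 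Second, I count the $\binom{n}{2}$ unordered pairs by type: $\binom{k}{2}$ main-main at distance 1, $k(\alpha-1)$ main-own-leaf at distance 1, $k(k-1)(\alpha-1)$ main-other-leaf at distance 2, $k\binom{\alpha-1}{2}$ same-main leaf pairs at distance 2, and $\binom{k}{2}(\alpha-1)^2$ different-main leaf pairs at distance 3 --- the dominant contribution. Third, doubling for ordered pairs, adding the edge cost $\alpha[\binom{k}{2} + k(\alpha-1)]$, substituting $k = n/\alpha$, and dividing by the star's social cost $\alpha(n-1) + 2(n-1)^2 \sim 2n^2$ produces the claimed ratio $\frac{3}{2} - \frac{3}{4\alpha} + \frac{1}{\alpha^2} + o(1)$ after careful tracking of lower-order terms.

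\textbf{Upper bound.} The driving observation for non-integer $\alpha$ is that the change in the distance sum under any unilateral deviation is integer-valued, so the Nash inequalities $\sum_w \Delta \dist(v,w) \geq \alpha$ (for edge deletion) and $\sum_w \Delta \dist(v,w) \leq \alpha$ (for edge addition) sharpen to $\geq \lfloor\alpha\rfloor + 1$ and $\leq \lfloor\alpha\rfloor$ respectively, each with a strict slack of $1 - (\alpha - \lfloor\alpha\rfloor)$ from $\alpha$. My plan has four steps. (i) \emph{Bound the diameter}: applying the add-edge inequality to an antipodal pair at distance $k$ forces $k - 1 \leq \lfloor\alpha\rfloor$, so the diameter of $G$ is at most $\lfloor\alpha\rfloor + 1$, a constant in $\alpha$. (ii) \emph{Show $G$ is nearly a tree}, i.e., $|E(G)| = n - 1 + o(n)$: excess edges beyond a spanning tree are candidates for deletion, and summing the sharpened delete inequalities across them --- combined with the bounded-diameter distance budget --- forces the excess to be small. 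The factor $(\alpha - \lfloor\alpha\rfloor)^{-2}$ in the theorem suggests a second-moment-style amortization at this stage. (iii) \emph{Show most pairs are at distance exactly 2}: for a sparse graph of bounded diameter, a counting argument (likely via BFS trees from random vertices, with concentration producing the $\sqrt{\log n / n}$ rate) forces all but $o(n^2)$ pairs to have distance at most 2. (iv) Combine: the social cost of $G$ is $\alpha|E(G)| + \sum_v \sum_w \dist(v,w) \leq \alpha(n + o(n)) + 2(n-1)^2 + o(n^2) = (1 + o(1)) \cdot \mathrm{OPT}$.

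\textbf{Main obstacle.} Step (ii) is the crux and will carry the quantitative weight of the proof. Converting the integrality slack $1 - (\alpha - \lfloor\alpha\rfloor)$ into a precise ``nearly a tree'' structural statement appears to require pooling the slacks across $\Theta(n)$ potential single-edge deviations and invoking a concentration bound to arrive at the stated $\sqrt{\log n / n}$ rate. Once step (ii) is established with the correct dependence on the fractional part of $\alpha$, the other steps should follow by more routine arguments.
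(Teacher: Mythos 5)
Your lower-bound argument (the clique-with-pendant-leaves construction, verification of Nash via the exactly-break-even clique-edge deletion, and counting pairs by type) is essentially identical to the paper's, and your pair counting is correct, so that half is fine.

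The upper bound is where the gap lies, and it is conceptual rather than quantitative. Your step (ii) asserts that a Nash equilibrium graph is nearly a tree, $|E(G)| = n - 1 + o(n)$. The paper neither proves nor uses this, and there is good reason to doubt it: the paper's argument explicitly splits into two cases, and in one of them --- the case where \emph{every} vertex has degree exceeding $\sqrt{n\log n}$ --- the graph would have $\Omega(n^{3/2}\sqrt{\log n})$ edges, far more than $n + o(n)$. Nothing in the integrality slack appears to rule this out, and the paper does not try; instead it observes (Lemma 2.7) that in that dense case every vertex can unilaterally achieve cost at most $\alpha\sqrt{n\log n} + 2n$ by connecting to a uniformly random set of $\sqrt{n\log n}$ vertices, so the Nash condition directly forces every individual cost to be essentially optimal, \emph{without} any bound on $|E(G)|$ or use of the fractional part of $\alpha$. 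The complementary case --- some vertex $v$ of degree at most $\sqrt{n\log n}$ --- is where the integrality trick actually earns its keep: choosing that $v$ as the BFS root, one shows that the distance-$1$ layer $N_1$ is small, then (via the key estimate that a vertex in $N_2$ can pay for at most $|N_1|\cdot\frac{\alpha}{\alpha - \lfloor\alpha\rfloor}$ intra-$N_2$ edges) that $|N_3 \cup N_4|$ is also small, and finally, by applying this to every vertex as root, that the number of distance-$3$-or-$4$ pairs is $o(n^2)$. This dichotomy is the crux you are missing. Your plan tries to prove a global sparsity statement by ``pooling slacks across $\Theta(n)$ deviations,'' but no mechanism is given for that pooling, and the target statement is likely false; the paper sidesteps the issue entirely by noting that dense equilibria are automatically near-optimal for a different reason. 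Your step (i) (diameter $\leq \lfloor\alpha\rfloor + 1$ via the single-vertex savings of an added edge) is sound but weaker than the paper's diameter-$4$ bound for $n > \alpha^3$; either suffices as a constant. But without a replacement for (ii), the argument does not go through.
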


\section{Proof for non-integral $\boldsymbol{\alpha}$}

Assume that we are given a Nash equilibrium.  In this section, we prove
that its total social cost is bounded by $1+o(1)$ times the social optimum,
as stated in Theorem \ref{thm:main}.  Throughout this proof, we impose a
structure on the graph as follows: select a vertex $v$, and partition the
remainder of the graph into sets based on their distance from $v$. Let
$N_{1}$ denote the set of vertices at distance 1 from $v$, let $N_{2}$
denote the set of vertices at distance 2 from $v$, etc., as diagrammed in
Figure \ref{fig:1}.  Since the graph in every Nash equilibrium is obviously
connected, every vertex falls into one of these sets.

\begin{figure}[htbp]
  \begin{center}
    \includegraphics[scale=0.8]{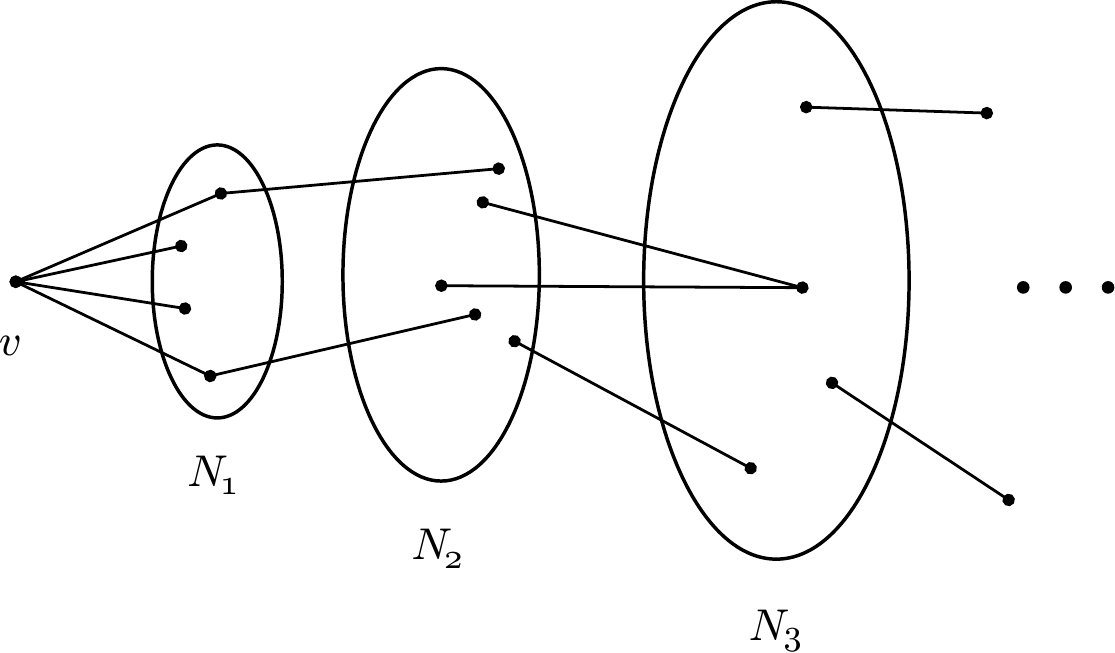}
  \end{center}
  \caption{\small Partitioning the graph into sets.}
  \label{fig:1}
\end{figure}

Consider any vertex $v_{i}\in N_{i}$ where $i\geq3$. Since the graph is
connected, we can always find a path $v_{i}v_{i-1}v_{i-2}\ldots
v_{2}v_{1}v$, where $v_{j}\in N_{j}$ for all $1\leq j\leq i$.  In
this case, we will call $v_{i}$ a \emph{child}\/ of $v_{2}$. (Note that
$v_{i}$ may be a child of more than one vertex, but is always a child of at
least one vertex.)  This is diagrammed in Figure \ref{fig:2}.

\begin{figure}[htbp]
  \begin{center}
    \includegraphics[scale=0.8]{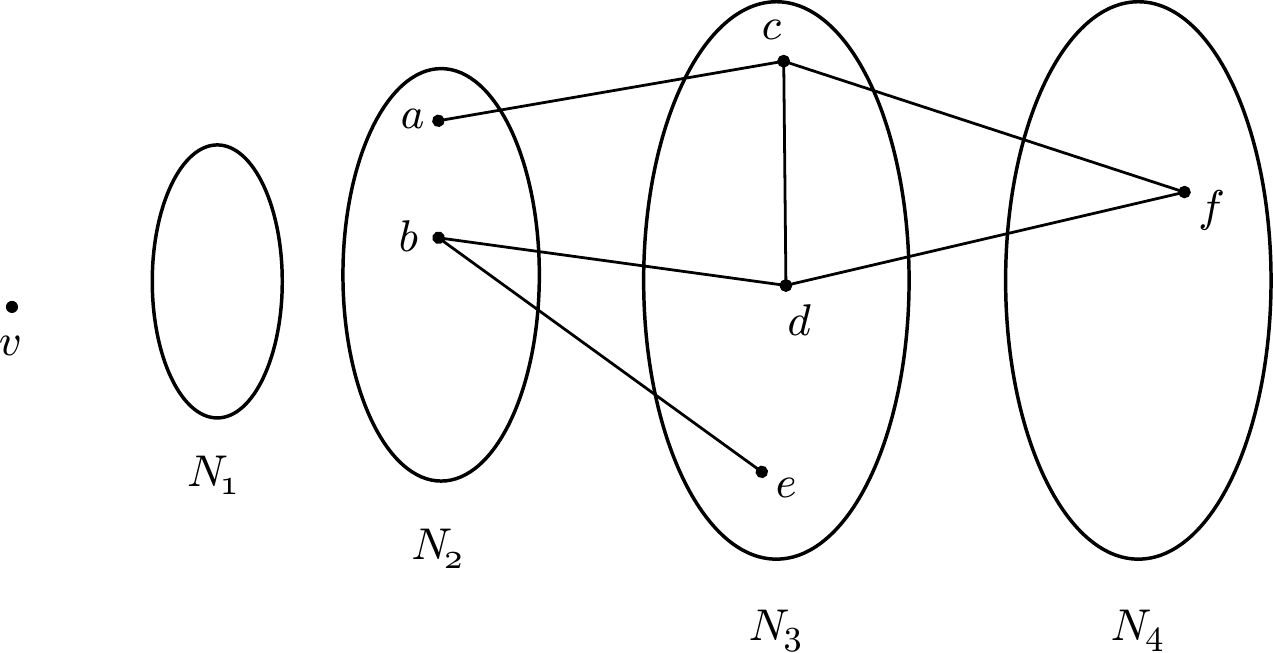}
  \end{center}
  \caption{\small Here, $d$ and $e$ are children of $b$; $c$ is a child of
  $a$, but not a child of $b$; and $f$ is a child of both $a$ and $b$.}
  \label{fig:2}
\end{figure}

\begin{lemma}
  No matter which vertex is used as $v$ to construct the vertex partition,
  every vertex in $N_{2}$ has at most $\lfloor \alpha-1 \rfloor$ children.
  \label{lem:N2-children}
\end{lemma}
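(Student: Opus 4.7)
The plan is to exhibit a would-be profitable deviation by the root vertex $v$. Concretely, I would consider the alternative strategy in which $v$ purchases exactly one additional edge, directly to the fixed vertex $v_2\in N_2$ whose number of children we wish to bound. Since $v_2\in N_2$, the edge $vv_2$ is not present in the current graph (otherwise $v_2$ would lie in $N_1$), so the deviation is well-defined and increases the edge-purchasing term in $v$'s cost by exactly $\alpha$.

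The next step is to tally the resulting decrease in $v$'s distance sum. First, $\dist(v,v_2)$ drops from $2$ to $1$, a saving of $1$. Next, for each child $u\in N_i$ of $v_2$ (with $i\geq 3$), the definition of child furnishes a path $v_2 v_3\cdots v_i=u$ of length $i-2$, while the triangle inequality around $v$ gives the matching lower bound $\dist(v_2,u)\geq \dist(v,u)-\dist(v,v_2)=i-2$, so $\dist(v_2,u)=i-2$ exactly. After the deviation, the new $v$-to-$u$ distance is at most $1+\dist(v_2,u)=i-1$, a further saving of at least $1$ per child. No other pairwise distance can increase, since the deviation only adds an edge. Writing $k$ for the number of children of $v_2$, the deviation therefore changes $v$'s cost by at most $\alpha-1-k$.

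The Nash equilibrium assumption forbids any strictly beneficial unilateral deviation, so $\alpha-1-k\geq 0$, i.e., $k\leq\alpha-1$. Since $k$ is a nonnegative integer, this yields $k\leq\lfloor\alpha-1\rfloor$, as required. I do not foresee a significant obstacle: the only mildly delicate ingredient is the equality $\dist(v_2,u)=i-2$ for children, which is immediate from the triangle inequality combined with the witnessing path. Everything else rests on the monotonicity of distances under edge addition and a direct rewrite of the cost function.
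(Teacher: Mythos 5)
Your proof is correct and follows essentially the same approach as the paper: have $v$ buy an edge to the vertex in $N_2$, observe the $\alpha$ cost against at least $1+k$ in distance savings (to the vertex and each of its $k$ children), and use the Nash-equilibrium condition plus integrality of $k$. The extra triangle-inequality detail you include to pin down $\dist(v_2,u)$ is harmless but not needed, since the witnessing path already gives the upper bound $\dist(v_2,u)\leq i-2$, which is all the argument uses.
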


\begin{proof}
  Suppose $w \in N_{2}$ has more than $\alpha-1$ children. Consider what
  happens if $v$ buys an edge to $w$.  Although $v$ pays $\alpha$ for the
  edge, it gets one step closer to $w$ and all of its children, and so the
  distance component of $v$'s cost function reduces by more than
  $1+(\alpha-1) = \alpha$. Therefore, buying the edge is a net positive
  gain for $v$.  But we assumed the graph was a Nash
  equilibrium---contradiction.  Therefore, $w$ has at most $\alpha-1$
  children, and since its number of children is an integer, we may round
  the bound down as in the statement of the lemma.
\end{proof}

\begin{lemma}
  Regardless of the choice of $v$, the resulting parts $N_i$ satisfy:
  \begin{displaymath}
    |N_{1}|+|N_{2}|+1
    \geq
    \frac{n}{\alpha} \,.
  \end{displaymath}
  \label{lem:N12-big}
\end{lemma}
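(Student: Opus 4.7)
The plan is to combine Lemma \ref{lem:N2-children} with the observation that every vertex at distance at least 3 from $v$ has, by definition, at least one parent in $N_2$. This effectively forces the portion of the graph far from $v$ to be ``supported'' by vertices in $N_2$, each of which can support only a bounded number of descendants.

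First I would note that by the definition of \emph{child} given just before Lemma \ref{lem:N2-children}, every vertex lying in some $N_i$ with $i \geq 3$ is a child of at least one vertex in $N_2$. Combined with Lemma \ref{lem:N2-children}, which caps each $w \in N_2$ at $\lfloor \alpha - 1 \rfloor \leq \alpha - 1$ children, a union bound over $N_2$ gives
\begin{displaymath}
  \sum_{i \geq 3} |N_i| \;\leq\; (\alpha - 1)\,|N_2| \,.
\end{displaymath}

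Next, since the graph in a Nash equilibrium is connected, every vertex other than $v$ lies in some $N_i$, so $n = 1 + |N_1| + |N_2| + \sum_{i \geq 3} |N_i|$. Substituting the bound above yields
\begin{displaymath}
  n \;\leq\; 1 + |N_1| + |N_2| + (\alpha-1)|N_2| \;=\; 1 + |N_1| + \alpha\,|N_2| \,.
\end{displaymath}
Finally, since $\alpha > 2 > 1$, each of the terms $1$ and $|N_1|$ is bounded by $\alpha$ times itself, so the right-hand side is at most $\alpha(1 + |N_1| + |N_2|)$, and dividing through by $\alpha$ gives the claim.

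There is no real obstacle here; the only thing worth being careful about is the multiplicity issue: a vertex at distance $\geq 3$ may be a child of several vertices in $N_2$, but this only inflates the right-hand side of the child-counting inequality, so the bound goes the right way. Everything else is a one-line arithmetic rearrangement using $\alpha \geq 1$.
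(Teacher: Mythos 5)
Your proof is correct and follows essentially the same approach as the paper: observe that every vertex at distance $\geq 3$ is a child of some vertex in $N_2$, apply Lemma \ref{lem:N2-children} to bound the children per $N_2$-vertex by $\alpha-1$, and then rearrange using $\alpha > 1$. The only difference is that you spell out the final step (bounding $1 + |N_1| + \alpha|N_2|$ by $\alpha(1 + |N_1| + |N_2|)$) that the paper leaves implicit.
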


\begin{proof}
  Since every vertex in $N_{3}\cup N_{4}\cup\ldots$ is a child of at least
  one vertex of $N_2$, but Lemma \ref{lem:N2-children} bounds the number of
  children per $N_2$-vertex by $\alpha-1$, we must have
  \begin{align*}
    (\alpha-1) |N_2|
    &\geq
    |N_{3}\cup N_{4}\cup\ldots|
    =
    (n-1-|N_{1}|-|N_{2}|) \\
    \alpha |N_2| + |N_1| + 1
    &\geq n
    \,,
  \end{align*}
  which implies the desired result.
\end{proof}

\begin{lemma}
  If $x$ has degree at least $\alpha$, then every vertex is at most
  distance 3 from it.
  \label{lem:big-dist-3}
\end{lemma}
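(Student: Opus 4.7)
The plan is to use $x$ itself as the reference vertex in the partitioning scheme from earlier in the section. Then $N_1$ is precisely the set of neighbors of $x$, so the hypothesis $\deg(x) \geq \alpha$ translates to $|N_1| \geq \alpha$. I would then proceed by contradiction: suppose some vertex $y$ lies at distance $i \geq 4$ from $x$, i.e.\ $y \in N_i$ with $i \geq 4$, and show that $y$ has a strictly improving unilateral deviation, contradicting the Nash equilibrium property.

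The deviation I would analyze is: $y$ purchases a single new edge directly to $x$. This increases $y$'s edge-cost by exactly $\alpha$, so I need to exhibit more than $\alpha$ worth of savings in the distance component of $y$'s cost. I would bound the distance savings by considering two categories of vertices. First, $y$'s distance to $x$ itself drops from $i$ to $1$, giving a saving of $i-1 \geq 3$. Second, for each neighbor $z$ of $x$ (i.e.\ each $z \in N_1$), the triangle inequality gives $\dist(y,z) \geq \dist(y,x) - \dist(x,z) = i-1$, while after adding the edge, $\dist(y,z) \leq 2$ via the new edge to $x$ followed by the old edge $xz$; this saves at least $(i-1) - 2 = i-3 \geq 1$ per such $z$. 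Summing over the two categories yields a total distance saving of at least
\begin{displaymath}
  (i-1) + |N_1|(i-3) \geq 3 + \alpha \cdot 1 = \alpha + 3,
\end{displaymath}
so the net change in $y$'s cost is at most $\alpha - (\alpha+3) = -3 < 0$, a strict improvement. This contradicts the Nash equilibrium assumption.

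The argument is short and essentially mechanical; the only thing to be careful about is correctly lower-bounding $\dist(y,z)$ for $z \in N_1$ via the triangle inequality, and noting that because $i \geq 4$, $y$ cannot already possess an edge to $x$, so the proposed deviation is genuinely a new purchase. I do not anticipate any real obstacle beyond this bookkeeping.
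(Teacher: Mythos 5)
Your argument is correct and is essentially the same as the paper's: buy a single edge from the far vertex to $x$, save at least $3$ steps to $x$ itself and at least $1$ step to each of the $\geq\alpha$ neighbors of $x$, for a net distance saving exceeding the edge cost $\alpha$. Your version just makes the triangle-inequality bookkeeping explicit where the paper states it more informally.
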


\begin{proof}
  If some vertex $w$ is distance at least 4 from $x$, then $w$ can buy an
  edge to $x$.  Vertex $w$ will pay $\alpha$ for the edge, and get 3 steps
  closer to $x$, as well as at least 1 step closer to all of $x$'s
  immediate neighbors, for a net gain.  Hence this cannot appear in a Nash
  equilibrium.
\end{proof}

\begin{corollary}
  If $n$ is sufficiently large ($n>\alpha^{3})$, then the graph has
  diameter at most 4.
  \label{cor:diam-4}
\end{corollary}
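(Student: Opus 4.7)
The plan is to argue by contradiction: suppose there exist vertices $v, w$ with $d(v,w) \geq 5$, and partition the graph into layers $N_1, N_2, \ldots$ around $v$. The goal is to produce a vertex $u$ with $d(v,u) \leq 1$ and $\deg(u) \geq \alpha$. Lemma~\ref{lem:big-dist-3} applied to $u$ then forces $d(u,w) \leq 3$, and the triangle inequality gives $d(v,w) \leq d(v,u) + d(u,w) \leq 4$, a contradiction.

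If $\deg(v) = |N_1| \geq \alpha$, take $u = v$. Otherwise $|N_1| < \alpha$, and I work inside $N_1$. To lower bound $|N_2|$, I would invoke Lemma~\ref{lem:N2-children}: every vertex of $N_3 \cup N_4 \cup \cdots$ is a child of some $N_2$-vertex, and each such vertex has at most $\alpha-1$ children, so $|N_3 \cup N_4 \cup \cdots| \leq (\alpha-1)|N_2|$. Combined with $n = 1 + |N_1| + |N_2| + |N_3 \cup \cdots|$, this rearranges to $|N_2| \geq (n - 1 - |N_1|)/\alpha$, and the hypothesis $n > \alpha^3$ yields $|N_2| > \alpha^2 - (1+|N_1|)/\alpha$.

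Every vertex of $N_2$ has a neighbor in $N_1$, so pigeonhole produces $u \in N_1$ with at least $|N_2|/|N_1|$ neighbors in $N_2$. Together with the edge $uv$,
\[
  \deg(u) \;\geq\; 1 + \frac{|N_2|}{|N_1|} \;>\; 1 + \frac{\alpha^2}{|N_1|} - \frac{1+|N_1|}{\alpha\,|N_1|}.
\]
Clearing denominators shows $\deg(u) > \alpha$ is equivalent to $\alpha^3 > (\alpha^2 - \alpha + 1)|N_1| + 1$. Since $|N_1| < \alpha$ and $\alpha^2 - \alpha + 1 > 0$, it suffices to verify $\alpha^3 \geq (\alpha^2 - \alpha + 1)\alpha + 1$, i.e., $\alpha^2 \geq \alpha + 1$, which holds for every $\alpha > 2$. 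Hence $\deg(u) > \alpha$, and Lemma~\ref{lem:big-dist-3} closes the argument. The main obstacle is the tightness of this final inequality: the hypothesis $n > \alpha^3$ is calibrated so that even when $|N_1|$ is as large as permitted (just below $\alpha$), the pigeonhole barely delivers $\deg(u) > \alpha$, so the algebra must be tracked carefully rather than absorbed into asymptotic slack.
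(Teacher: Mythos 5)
Your proof is correct and follows the same route as the paper: exhibit a vertex $u$ within distance $1$ of $v$ with $\deg(u)\ge\alpha$, then apply Lemma~\ref{lem:big-dist-3} and the triangle inequality. The paper compresses the first step into a single sentence (``Lemma~\ref{lem:N12-big} implies \ldots\ $n>\alpha^3$ suffices''), whereas you make the pigeonhole over $N_1$ explicit, and your care is actually warranted. You reach back past the packaged statement of Lemma~\ref{lem:N12-big} to the sharper intermediate inequality $|N_2|\ge(n-1-|N_1|)/\alpha$ coming directly from Lemma~\ref{lem:N2-children}; this is not cosmetic, since if one pigeonholes from the weaker $|N_2|\ge n/\alpha-|N_1|-1$ the resulting lower bound $1+|N_2|/|N_1|$ falls just short of $\alpha$ when $|N_1|=\lfloor\alpha\rfloor$ and $\alpha$ is only slightly above an integer, so the stated threshold $n>\alpha^3$ genuinely needs the tighter form you used.
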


\begin{proof}
  Consider an arbitrary pair of vertices $v,w$. Lemma \ref{lem:N12-big}
  implies that for $n$ sufficiently large ($n>\alpha^{3}$ suffices), either
  $v$ has degree at least $\alpha$, or one of $v$'s neighbors has degree at
  least $\alpha$.  In either case, we can travel from $v$ to a vertex with
  degree at least $\alpha$ in at most one step, and then by Lemma
  \ref{lem:big-dist-3}, travel to $w$ in at most 3 more steps. Therefore,
  $v$ and $w$ are at distance at most 4.
\end{proof}

\noindent \textbf{Remark.}  From now on, we will assume $n>\alpha^{3}$,
and so for any initial choice of $v$, the resulting partition will only
have $N_{1}$, $N_{2}$, $N_{3}$, and $N_{4}$.

\begin{lemma}
  Consider the partition constructed from an arbitrary initial vertex $v$.
  Select any $w \in N_{2}$, and let $d$ be the number of edges $w$ pays for
  which connect to other vertices in $N_{2}$.  Then
  $d\leq|N_{1}| \cdot \frac{\alpha}{\alpha-\lfloor\alpha\rfloor}$.
  \label{lem:N2-bdd-degree}
\end{lemma}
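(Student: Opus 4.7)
Let $T \subseteq N_2 \cap N(w)$ be the $d$-element set of $N_2$-vertices whose edges to $w$ are paid for by $w$.  The plan is to consider the specific alternative strategy in which $w$ simultaneously drops all $d$ edges to $T$ and purchases edges to every vertex of $N_1 \setminus N(w)$.  Writing $k_1 = |N_1 \setminus N(w)|$, the change in $w$'s edge cost is $(k_1-d)\alpha$, so the Nash equilibrium condition gives $(k_1-d)\alpha + \Delta\dist \ge 0$, where $\Delta\dist$ is the total change in the sum of distances from $w$ to the other vertices.  The core of the argument will be to show $\Delta\dist \le d\lfloor\alpha\rfloor - k_1$; substituting, the Nash inequality becomes $k_1(\alpha-1) \ge d(\alpha-\lfloor\alpha\rfloor)$, and the claim follows from $k_1 \le |N_1|$ and $\alpha-1 \le \alpha$.

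To establish the bound on $\Delta\dist$, I account for each vertex's individual contribution, grouped by the layer containing it.  The vertex $v$ stays at distance $2$.  Each vertex of $N_1 \setminus N(w)$ moves from distance $2$ to $1$, contributing $-k_1$ in total.  Each $w' \in T$ moves from distance $1$ to distance exactly $2$, via $w'$'s $N_1$-neighbor (which now lies in $N(w)$), contributing $+d$.  The remaining $N_2$-vertices contribute $\le 0$: those in $(N(w) \cap N_2) \setminus T$ keep distance $1$ since their edges are not dropped, while those outside $N(w)$ already had distance $\ge 2$ and can still be reached in at most two steps through the new $N_1$-connections.  Finally, since $w$ becomes adjacent to all of $N_1$, any $x \in N_3$ has new distance $\le 3$ and any $x \in N_4$ has new distance $\le 4$, so each $N_3 \cup N_4$-vertex contributes at most $1$ to $\Delta\dist$.

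The delicate step is to bound how many $N_3 \cup N_4$-vertices actually have strictly increased distance.  For $x \in N_3$ with $\delta_x > 0$, every length-$2$ $w$-$x$ path must be destroyed by dropping $T$, so $x$ is adjacent to some $w' \in T$, exhibiting $x$ as a child of $w'$.  For $x \in N_4$, I analyze a length-$3$ path $w \to y_1 \to y_2 \to x$: level constraints rule out $y_1 \in N_1$, and paths with $y_1 \in N_3$ are never affected since $T \subseteq N_2$, so destruction forces $y_1 \in T \subseteq N_2$, which in turn forces $y_2 \in N_3$; then $y_2 \in N_3 \cap N(y_1) \cap N(x)$ exhibits $x$ as a child of $y_1$.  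By Lemma \ref{lem:N2-children}, each $w' \in T$ has at most $\lfloor\alpha\rfloor-1$ children, so the total $N_3 \cup N_4$-contribution is at most $d(\lfloor\alpha\rfloor-1)$.  Summing gives $\Delta\dist \le -k_1 + d + d(\lfloor\alpha\rfloor-1) = d\lfloor\alpha\rfloor - k_1$, completing the plan.  The main obstacle will be executing the layer-sequence analysis for $N_4$-vertices cleanly, since one must rule out every possible alternative length-$3$ path before concluding that a strict distance increase forces the child relationship.
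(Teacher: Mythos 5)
Your proposal is correct and takes essentially the same approach as the paper: swap the $d$ intra-$N_2$ edges for edges to all of $N_1$, bound the distance increase within $N_2$ by $d$ and in $N_3 \cup N_4$ by $d(\lfloor\alpha\rfloor-1)$ via Lemma~\ref{lem:N2-children}, and invoke the Nash condition. You carry out the $N_3 \cup N_4$ bookkeeping with a more explicit case analysis (and also track the $-k_1$ savings from getting closer to $N_1 \setminus N(w)$, yielding a marginally tighter constant $\alpha-1$ in place of $\alpha$), but the strategy and key inequality are identical to the paper's.
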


\begin{proof}
  Consider the following strategy for $w$: disconnect those $d$ edges, and
  instead connect to every vertex in $N_{1}$. We will carefully tally up
  the potential gain for this amendment.

  \begin{itemize}
    \item \emph{Paying for edges:} $w$ \textbf{saves at least
        $\boldsymbol{(d - |N_1|) \alpha}$} in terms of paying for edges.
        (The ``at least'' is because $w$ might already be connected to some
        vertices in $N_{1}$.)

    \item \emph{Connectedness to $v$ and $N_1$:} $w$ obviously can't get
      farther away from $v$ or any vertices in $N_{1}$.

    \item \emph{Connectedness within $N_2$:} $w$ gets farther away from all
      $d$ vertices it disconnected from, but remains at distance 2 from all
      of $N_{2}$, since every vertex in $N_{2}$ is connected to some vertex
      in $N_{1}$. This results in a maximum \textbf{increased cost of
        $\boldsymbol{d}$} in terms of distances to other vertices within
        $N_{2}$.

      \item \emph{Connectedness to $N_3$ and $N_4$:} When disconnecting
        from a vertex $x\in N_{2}$, $w$ might get farther away from all of
        $x$'s children in $N_{3}$ and $N_{4}$. However, remember that $w$
        is still distance 2 from all of $N_{2}$. Hence, $w$ is still
        distance 3 from all of $N_{3}$ and distance 4 from all of $N_{4}$.
        Therefore, $w$ can only get 1 step farther from $x$'s children, and
        doesn't get any farther from vertices in $N_{3}$ and $N_{4}$ that
        aren't $x$'s children.  By Lemma \ref{lem:N2-children}, every
        $N_2$-vertex has at most $\lfloor\alpha-1\rfloor$ children.
        Therefore, in disconnecting from $d$ vertices, $w$ gets 1 step
        farther from at most $d\lfloor\alpha-1\rfloor$ vertices in $N_{3}$
        and $N_{4}$, for a \textbf{cost increase of at most $\boldsymbol{d
        \lfloor \alpha - 1 \rfloor}$}. 
  \end{itemize}

  Adding, $w$'s net cost savings total to at least
  $(d-|N_{1}|)\alpha-d-d\lfloor\alpha-1\rfloor$, which must be $\leq 0$
  since we are at a Nash equilibrium.  Rearranging, $d \leq |N_{1}| \cdot
  \frac{\alpha}{\alpha-\lfloor\alpha\rfloor}$, as desired.
\end{proof}

\begin{lemma}
  If $|N_{1}|$ is $o(n)$, then so is $|N_{3}\cup N_{4}|$.  Quantitatively,
  $|N_{3}\cup
  N_{4}|<|N_{1}| \cdot \frac{5\alpha^{3}}{\alpha-\lfloor\alpha\rfloor}$.
  \label{lem:N34<N1}
\end{lemma}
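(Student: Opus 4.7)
The plan is to combine Lemma \ref{lem:N2-children} (children bound) with a Lemma \ref{lem:N2-bdd-degree}-style deviation argument. Define $N_2'$ to be the set of $w \in N_2$ having at least one child. Since every $z \in N_3 \cup N_4$ is a child of at least one $N_2$-vertex (namely, any $v_2$ lying on a shortest path $zv_{d-1}\cdots v_2 v_1 v$), the image of the parent map lies in $N_2'$, and by Lemma \ref{lem:N2-children} each $w \in N_2'$ has at most $\lfloor\alpha-1\rfloor$ children. Hence $|N_3 \cup N_4| \leq (\alpha-1)|N_2'|$, and it suffices to bound $|N_2'|$ by roughly $|N_1|\cdot\alpha^2/(\alpha-\lfloor\alpha\rfloor)$.

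Every $w \in N_2'$ has at least one neighbor in $N_1$, so letting $N_2'(u) = \{w \in N_2' : w \sim u\}$ for each $u \in N_1$, we have $|N_2'| \leq \sum_{u \in N_1}|N_2'(u)|$; I would bound $|N_2'(u)|$ uniformly in $u$. Fix such a $u$. If $|N_2'(u)|$ were large, pick some $w \in N_2'(u)$ with a child $z$ in $N_3$, and apply Nash to $z$ buying the single edge $zu$. After this switch, $z$ drops to distance $1$ from $u$ (was $2$), distance $2$ from $v$ (was $3$), distance $\leq 2$ from every other $w' \in N_2'(u)$ (was $3$), and distance $\leq 3$ from each child of such a $w'$ (was $4$). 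The total distance gain is thus at least linear in $|N_2'(u)|$ (plus contributions from the distinct children of vertices in $N_2'(u)$), while the edge cost is only $\alpha$, forcing $|N_2'(u)|$ to be bounded.

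The main obstacle is multiplicity: the children of distinct $w' \in N_2'(u)$ might coincide, and $z$ itself could be adjacent to several vertices in $N_2'(u)$, both of which cut into the naive gain estimate. To obtain the factor $\alpha/(\alpha-\lfloor\alpha\rfloor)$ (rather than a plain $\alpha$), I expect to re-use the integer-rounding trick from the proof of Lemma \ref{lem:N2-bdd-degree}: since the child count of any $N_2$-vertex is an integer strictly less than $\alpha$, it is at most $\lfloor\alpha-1\rfloor$, which produces a slack of $\alpha-\lfloor\alpha\rfloor$ per vertex that, after summing, yields the desired denominator. Putting the pieces together --- the bound $|N_2'(u)| \leq O(\alpha^2/(\alpha-\lfloor\alpha\rfloor))$ for each $u \in N_1$, summation over $N_1$ to get $|N_2'|\leq |N_1|\cdot O(\alpha^2/(\alpha-\lfloor\alpha\rfloor))$, and a final application of Lemma \ref{lem:N2-children} --- should yield the stated $|N_3\cup N_4| < |N_1|\cdot 5\alpha^3/(\alpha-\lfloor\alpha\rfloor)$ after tracking the constants.
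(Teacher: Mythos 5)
Your approach differs fundamentally from the paper's, which double-counts pairs $(x,y)$ with $x \in N_3 \cup N_4$ and $\dist(x,y) \leq 2$: the lower bound $P \geq |N_3 \cup N_4| \cdot n/\alpha$ follows from Lemma~\ref{lem:N12-big} applied with center $x$, and the upper bound comes from case-analyzing walks of length at most two out of $N_3 \cup N_4$ using Lemmas~\ref{lem:N2-children} and~\ref{lem:N2-bdd-degree}. You instead try to bound $|N_2'|$ (parents having at least one child), then $|N_2'(u)|$ for each $u \in N_1$, via a deviation for a single $z \in N_3$.

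Your opening reduction --- $|N_3 \cup N_4| \le \lfloor\alpha-1\rfloor\,|N_2'|$ and $|N_2'| \le \sum_{u \in N_1}|N_2'(u)|$ --- is sound. But the deviation step has a real gap, one you flag and then wave away. When $z$ buys $zu$, the profit you want is roughly $|N_2'(u)|$, one unit per $w' \in N_2'(u)$ that moves from distance $3$ down to distance $2$. Yet you cannot assume $z$ starts at distance $3$ from these $w'$. If $z$ is already adjacent to (or at distance $2$ from) most of $N_2'(u)$, the deviation earns essentially nothing, and none of the cited lemmas rules this out: Lemma~\ref{lem:N2-children} bounds the number of children per $N_2$-vertex, not the number of $N_2$-vertices a single $N_3$-vertex may be adjacent to, and the integer-rounding observation from Lemma~\ref{lem:N2-bdd-degree} produces the $\alpha - \lfloor\alpha\rfloor$ denominator but has nothing to do with this multiplicity problem. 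To make such a deviation work you would need an a priori bound on $z$'s degree into $N_2$, or a choice of $z$ provably far from $N_2'(u)$, or an averaging over many choices of $z$ --- none of which is carried out. The paper's double-count sidesteps the issue entirely, because over-counting walks is harmless in an upper bound, whereas in your argument shared adjacencies eat directly into the claimed gain.
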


\begin{proof}
  Let $P$ be the number of pairs of vertices $(x,y)$, such that $x \in
  N_{3}\cup N_{4}$ and $y$ is at most distance 2 from $x$. We will bound
  this number in two ways.  First, Lemma \ref{lem:N12-big} tells
  us that for any vertex in the graph, the number of vertices at most
  distance 2 from it is at least $\frac{n}{\alpha}$.  Therefore,
  $P\geq|N_{3}\cup N_{4}| \cdot \frac{n}{\alpha}$.

  For the second way, we will find an upper bound for the number of ways to
  start at a vertex $x \in N_{3}\cup N_{4}$, and then travel along at most
  two edges in some way. This is an overcount for $P$, so it will give an
  upper bound. To count the number of these paths, we do casework on the
  various ways to start at a vertex in $N_{3}\cup N_{4}$ and then travel
  along at most two edges.

  \medskip

  \emph{Case 1: The path stays inside $N_{3}\cup N_{4}$.} Any vertex in
  $N_{3}\cup N_{4}$ can be connected to at most $\alpha-1$ other vertices
  in $N_{3}\cup N_{4}$ (otherwise $v$ would gain from connecting to it
  directly), so the number of paths for us to count for each starting
  vertex is at most $1+(\alpha-1)+(\alpha-1)^{2}\leq\alpha^{2}$. Therefore,
  the total number of paths of this type is at most $\boldsymbol{|N_3 \cup
  N_4| \alpha^2}$.

  \medskip

  \emph{Case 2: The path travels from $N_{3}\cup N_{4}$ to $N_{3}$ to
  $N_{2}$, or is a length 1 path traveling from $N_{3}$ to $N_{2}$.} We
  count these backwards, starting from $N_2$.  The number of edges from
  $N_{2}$ to $N_{3}$ is at most $\alpha|N_{2}|$ by Lemma
  \ref{lem:N2-children}, and again, every vertex in $N_{3}$ is connected to
  at most $\alpha$ vertices in $|N_{3}\cup N_{4}|$, if including itself.
  Therefore, the number of paths here is at most
  $\boldsymbol{\alpha^{2}|N_{2}|}$.

  \medskip

  \emph{Case 3: The path travels from $N_{3}$ to $N_{2}$ to $N_{3}$.} We
  can count these by looking at the vertex in $N_{2}$ first, and then
  picking 2 of its children in $N_{3}$. Thus, the number of such paths is
  at most $\boldsymbol{|N_2| \alpha^2}$.

  \medskip

  \emph{Case 4: The path travels from $N_{3}$ to $N_{2}$ to $N_{1}$.}
  Similarly to Case 2, the number of such paths is at most
  $\boldsymbol{\alpha |N_2| |N_1|}$.

  \medskip

  \emph{Case 5: The path travels from $N_{3}$ to $N_{2}$ to $N_{2}$.} By
  Lemma \ref{lem:N2-bdd-degree}, the number of edges inside $N_{2}$ is at
  most $|N_{2}||N_{1}|\frac{\alpha}{\alpha-\lfloor\alpha\rfloor}$.  Each
  such path consists of one of these edges, together with an edge to $N_3$
  from one of its two endpoints.  Therefore, the number of paths for us to
  count is at most
  $\boldsymbol{|N_{2}||N_{1}|\frac{2\alpha^{2}}{\alpha-\lfloor\alpha\rfloor}}$.

  \medskip

  \emph{Total:} summing over all cases, we have:
  \begin{align*}
    P
    &\leq
    |N_{3}\cup N_{4}|\alpha^{2}
    +2\alpha^{2}|N_{2}|
    +|N_{1}||N_{2}|\alpha
    +|N_{1}||N_{2}|\frac{2\alpha^{2}}{\alpha-\lfloor\alpha\rfloor}
    \\
    &<2\alpha^{2}n+|N_{1}|n\left(\alpha+\frac{2\alpha^{2}}{\alpha-\lfloor\alpha\rfloor}\right)
    \\
    &<|N_{1}|n \left(\frac{5\alpha^{2}}{\alpha-\lfloor\alpha\rfloor}\right) \,.
  \end{align*}
  But $P\geq|N_{3}\cup N_{4}|\frac{n}{\alpha}$ from above, so:
  \begin{align*}
    |N_{3}\cup N_{4}|\frac{n}{\alpha}
    &<
    |N_{1}|n \left(\frac{5\alpha^{2}}{\alpha-\lfloor\alpha\rfloor}\right) \\
    |N_{3}\cup N_{4}|
    &<
    |N_{1}| \cdot \frac{5\alpha^{3}}{\alpha-\lfloor\alpha\rfloor} \,.
  \end{align*}
\end{proof}

\begin{lemma}
  If every vertex has degree $>\sqrt{n \log n}$, then the graph is
  asymptotically socially optimal: the total social cost is at most
  $2n^{2}+\alpha n^{3/2} \sqrt{\log n}$.
  \label{lem:if-degrees-big}
\end{lemma}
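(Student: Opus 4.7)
The main idea is to exploit the large minimum degree to construct a small dominating set $A\subseteq V$, and then to apply the Nash inequality vertex-by-vertex using the deviation in which each $v$ replaces its current strategy by $A\setminus\{v\}$.  In the resulting graph $v$ becomes directly connected to every element of $A\setminus\{v\}$, and because $A$ dominates the graph, every other vertex $x$ has some neighbor $a\in A\setminus\{v\}$ (this is where one wants a \emph{$2$-dominating} set, so that even $v\in A$ does not ruin coverage); the edge $ax$ survives the deviation since neither endpoint is $v$, so $\dist_{G'}(v,x)\leq 2$ for every $x\neq v$ and $v$'s new distance-sum is at most $2(n-1)-|A|$.

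First I would construct the dominating set. Since every vertex has degree $>\delta := \sqrt{n\log n}$, the standard probabilistic / greedy bound $\gamma(G)\leq n(1+\ln(\delta+1))/(\delta+1)$ gives a dominating set of size at most $(1+o(1))\sqrt{n\log n}/2$; a small modification (include each vertex independently with probability $\Theta(\sqrt{\log n/n})$, then cover remaining uncovered vertices) produces a $2$-dominating set of the same asymptotic size.  Given this $A$, Nash delivers
\[
\alpha k_v + D_v \;\leq\; \alpha|A| + D^{\mathrm{new}}_v \;\leq\; (\alpha-1)|A| + 2(n-1)
\]
for every $v$, and summing over all $n$ vertices gives
\[
\mathrm{cost} \;\leq\; n\bigl[(\alpha-1)|A| + 2(n-1)\bigr] \;=\; (\alpha-1)\, n\, |A| + 2n(n-1).
\]
Substituting $|A|=O(\sqrt{n\log n})$ and using $\alpha - 1 \leq \alpha$ then yields the stated bound $2n^2 + \alpha n^{3/2}\sqrt{\log n}$ after absorbing lower-order terms.

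The hardest part is Step~1: the coefficient on $n^{3/2}\sqrt{\log n}$ in the final estimate scales linearly with $|A|/\sqrt{n\log n}$, so the dominating-set bound must be sharper than a constant multiple of $\sqrt{n\log n}$, which forces a careful execution of the probabilistic argument and some bookkeeping for the vertices $v\in A$.  If one chooses not to use the $2$-dominating variant, the $O(\sqrt{n\log n})$ vertices in $A$ can instead be estimated crudely via the diameter-$3$ conclusion of Lemma~\ref{lem:big-dist-3} to get $\alpha k_v+D_v\leq(\alpha+3)(n-1)$, whose total contribution is again only $O(\alpha n^{3/2}\sqrt{\log n})$ and therefore absorbed into the target bound.
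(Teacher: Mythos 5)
Your proposal is correct and rests on the same pillar as the paper's proof: high minimum degree gives every vertex access to a cheap deviation that makes it the center of a radius-$2$ ball, and the Nash inequality then caps each equilibrium cost by the cost of that deviation. The implementation differs. The paper has each vertex $w$ independently deviate to a \emph{fresh} random set of $\sqrt{n\log n}$ vertices and observes via a one-line union bound that such a set neighbors every other vertex with positive probability; since no single set has to work for every deviator, there is nothing to track for vertices inside the set, and the constant $\sqrt{n\log n}$ falls out exactly. You instead fix a single $2$-dominating set $A$ and reuse it for all deviators. That route is valid, but it incurs the two bookkeeping burdens you flag yourself: you need $2$-domination so that deleting the deviator $v$'s own edges cannot orphan a vertex whose only dominator in $A$ was $v$, and you must keep the constant in $|A|$ at or below $1\cdot\sqrt{n\log n}$. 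The greedy bound $\gamma(G)\leq n\bigl(1+\ln(\delta+1)\bigr)/(\delta+1)$ with $\delta=\sqrt{n\log n}$ gives roughly $\tfrac12\sqrt{n\log n}$, which is safe, but verifying that the $2$-dominating modification does not eat that slack takes a short Chernoff-type estimate rather than the paper's single inequality $(1-\sqrt{n\log n}/n)^{\sqrt{n\log n}}\leq 1/n$. Both routes land on $2n^2+\alpha n^{3/2}\sqrt{\log n}$ (yours in fact with $(\alpha-1)$ rather than $\alpha$, absorbed the same way); the paper's per-vertex random deviation is simply the lower-overhead way to package the identical combinatorial fact that large minimum degree forces a small ``covering'' set.
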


\begin{proof}
 Suppose we have a Nash equilibrium where all vertices have degree greater
 than $\sqrt{n \log n}$.  We give a strategy for an arbitrary vertex to
 achieve an individual cost of at most $\alpha \sqrt{n \log n} +2n$, by
 changing only its own behavior.  Since this is a Nash equilibrium, we will
 then be able to conclude that every vertex must have had individual cost
 at most $\alpha \sqrt{n \log n} + 2n$, proving this claim.
 
 Specifically, we show that for any vertex $w$, the strategy ``undo all
 edges you're currently paying for, and connect to $\sqrt{n \log n}$
 vertices at random'' has a positive probability of bringing it within
 distance $\leq 2$ from every other vertex in the graph.  Indeed, if $w$
 does this, then for any other vertex $x$, 
 \begin{align*}
   &\ \pr{\text{$x$ is now distance $> 2$ from $w$}} \\
   \leq
   &\ \pr{\text{$w$ didn't choose any of $x$'s neighbors}} \\
   \leq
   &\ \left(1-\frac{\sqrt{n \log n}}{n}\right)^{\sqrt{n \log n}} 
   \leq
   e^{-\log n}
   =
   \frac{1}{n}
   \,.
 \end{align*}
 Since there are only $n-1$ other vertices $x \neq w$ to consider, a union
 bound shows that the probability of failure is at most $(n-1) \frac{1}{n}
 < 1$, and therefore there is a way for $w$ to attain an individual cost of
 at most $\alpha \sqrt{n \log n}+2n$, as desired.
\end{proof}

\begin{lemma}
  Even if there is a vertex of degree at most $\sqrt{n \log n}$, the graph
  is still asymptotically socially optimal: the total social cost is at
  most
  $2n^{2}+n^{3/2} \sqrt{\log n} \cdot \frac{290\alpha^{6}}{(\alpha-\lfloor\alpha\rfloor)^{2}}$.
  \label{lem:if-not}
\end{lemma}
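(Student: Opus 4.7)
The plan is to use the given low-degree vertex $v$ as a reference point and to bound each agent's cost by comparing against the deviation ``additionally buy edges to every vertex in $N_1(v)$.''  Because $|N_1(v)|=\deg(v)\leq\sqrt{n\log n}$, Lemma \ref{lem:N34<N1} gives $|N_3\cup N_4|<\sqrt{n\log n}\cdot\frac{5\alpha^{3}}{\alpha-\lfloor\alpha\rfloor}$, so the distance sum
\begin{displaymath}
\sum_w\dist(v,w)=|N_1|+2|N_2|+3|N_3|+4|N_4|\leq 2(n-1)+2|N_3\cup N_4|
\end{displaymath}
sits within $O\bigl(\sqrt{n\log n}\cdot\alpha^{3}/(\alpha-\lfloor\alpha\rfloor)\bigr)$ of $2n$.

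The main step is to bound the distance sum of every vertex $u$ by the corresponding sum for $v$.  For any $u$, I would consider the deviation $S_u'=S_u\cup N_1(v)$, which only \emph{adds} edges and hence disconnects nothing.  In the resulting graph $G'$, the new edges put $u$ at distance at most $1$ from every vertex of $N_1(v)$, at most $2$ from $v$ and from every vertex of $N_2(v)$, at most $3$ from $N_3(v)$, and at most $4$ from $N_4(v)$.  A direct case analysis on which level contains $u$ (being careful with the self-term $w=u$ excluded from the sum) verifies in each case that $\sum_w\dist_{G'}(u,w)\leq \sum_w\dist(v,w)$.  The Nash inequality $\alpha|S_u|+\sum_w\dist(u,w)\leq\alpha|S_u'|+\sum_w\dist_{G'}(u,w)$ then gives, after the $\alpha|S_u|$ terms cancel,
\begin{displaymath}
\sum_w\dist(u,w)\leq \sum_w\dist(v,w)+\alpha|N_1(v)|,
\end{displaymath}
and summing over $u$ yields $\sum_u\sum_w\dist(u,w)\leq 2n^{2}+O\bigl(n^{3/2}\sqrt{\log n}\cdot\alpha^{3}/(\alpha-\lfloor\alpha\rfloor)\bigr)$, the distance component of the social cost.

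For the edge component $\alpha|E|$, I would simply partition the edges by which levels their endpoints lie in.  Edges incident to $v$ or lying inside $N_3\cup N_4$ contribute only lower-order terms because $|N_1|$ and $|N_3\cup N_4|$ are each $O(\sqrt{n\log n})$ up to an $\alpha$-factor; Lemma \ref{lem:N2-children} bounds the $N_2$--$N_3$ edges by $(\alpha-1)|N_2|$; the $N_1$--$N_2$ edges are at most $|N_1|\cdot|N_2|$ trivially; and Lemma \ref{lem:N2-bdd-degree} bounds the intra-$N_2$ edges by $|N_2|\cdot|N_1|\cdot\frac{\alpha}{\alpha-\lfloor\alpha\rfloor}$.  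Summing and multiplying by $\alpha$ absorbs $\alpha|E|$ into the same $O\bigl(n^{3/2}\sqrt{\log n}\cdot\alpha^{O(1)}/(\alpha-\lfloor\alpha\rfloor)\bigr)$ error term, and adding the two contributions yields the stated bound.

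The main technical hurdle I anticipate is the case analysis establishing $\sum_w\dist_{G'}(u,w)\leq \sum_w\dist(v,w)$: in each of the five cases $u=v$, $u\in N_1$, $u\in N_2$, $u\in N_3$, $u\in N_4$, the ``extra'' $1$ or $2$ that $u$ pays to reach $v$ through $N_1(v)$ in $G'$ must be offset either by self-term savings (when $u$ lies in $N_3$ or $N_4$) or by the $|N_1|\cdot 1$ contribution that replaces the $|N_1|\cdot\dist(v,\cdot)$ appearing in $\sum_w\dist(v,w)$.  Everything else is bookkeeping on top of existing structural lemmas.
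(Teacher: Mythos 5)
Your argument is correct but takes a genuinely different route from the paper's. The paper bounds the total distance sum combinatorially: it applies Lemma~\ref{lem:N34<N1} at \emph{every} vertex $w$ to conclude that the number of vertices at distance $3$ or $4$ from $w$ is at most $\deg(w)\cdot\frac{5\alpha^{3}}{\alpha-\lfloor\alpha\rfloor}$, then sums over $w$; since $\sum_w\deg(w)$ is twice the edge count (itself carrying another factor $\frac{\alpha^{3}}{\alpha-\lfloor\alpha\rfloor}$), the two factors multiply and produce the $\frac{\alpha^{6}}{(\alpha-\lfloor\alpha\rfloor)^{2}}$ in the stated bound. You instead turn the Nash condition into a \emph{cost transfer}: the deviation ``buy all of $N_1(v)$'' puts $u$ within distance $i$ of every $N_i(v)$ and within $2$ of $v$, and the case check you outline on which level contains $u$ indeed gives $\sum_w\dist_{G'}(u,w)\leq\sum_w\dist(v,w)$ (the $\leq 2$ paid to reach $v$ is always absorbed by the dropped self-term, which is $1$ for $u\in N_1$, $2$ for $u\in N_2$, etc.); cancellation in the Nash inequality then yields $\sum_w\dist(u,w)\leq\sum_w\dist(v,w)+\alpha|N_1(v)|$, and Lemma~\ref{lem:N34<N1} is invoked only once, for $v$. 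This is cleaner and in fact tighter, giving a leading error term of order $\frac{\alpha^{3}}{\alpha-\lfloor\alpha\rfloor}$ rather than its square. Two small omissions in your edge accounting: the $N_1$--$N_1$ edges (at most $\binom{|N_1|}{2}\leq n\log n/2$, lower order) should be listed, and for edges inside $N_3\cup N_4$ you should cite the fact (used in Case~1 of the proof of Lemma~\ref{lem:N34<N1}) that each such vertex has at most $\alpha-1$ neighbors inside $N_3\cup N_4$, so that count is $O(\alpha|N_3\cup N_4|)$ rather than potentially quadratic. With those fixes the argument is complete and actually improves the constant in the lemma's statement.
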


\begin{proof}
  Let $v$ be a vertex of degree at most $\sqrt{n \log n}$, and construct
  the vertex partition $N_{1},N_{2},N_{3},N_{4}$. We already know $|N_{1}|$
  is at most $\sqrt{n \log n} = o(n)$, so by Lemma \ref{lem:N34<N1},
  $|N_{3} \cup N_{4}|$ is at most $\sqrt{n \log n} \cdot
  \frac{5\alpha^{3}}{\alpha-\lfloor\alpha\rfloor}=o(n)$.  By Lemma
  \ref{lem:N2-bdd-degree}, the total number of edges inside $N_{2}$ is at
  most $|N_{2}||N_{1}|\frac{\alpha}{\alpha-\lfloor\alpha\rfloor}\leq
  n^{3/2}\sqrt{\log n} \cdot
  \frac{\alpha}{\alpha-\lfloor\alpha\rfloor}=o(n^{2})$.  Also, the total
  number of edges not completely inside $N_{2}$ is at most
  $n\cdot(1+|N_{1}|+|N_{3}\cup N_{4}|)\leq n^{3/2} \sqrt{\log n} \cdot
  \frac{6\alpha^{3}}{\alpha-\lfloor\alpha\rfloor}=o(n^{2})$.  Therefore,
  the total number of edges is the whole graph is at most $n^{3/2}
  \sqrt{\log n} \cdot
  \frac{7\alpha^{3}}{\alpha-\lfloor\alpha\rfloor}=o(n^{2})$.

  Next, we calculate a bound on the total sum of distances in the graph.
  Using Lemma \ref{lem:N34<N1} on every vertex in the graph, and the fact
  that all distances are at most 4 (Corollary \ref{cor:diam-4}), we get:
  \begin{align*}
    &\ \text{[total sum of distances in the graph]} \\
    \leq &\ 
    2n^2 + 4 \text{[\# of distances in the graph that are 3 or 4]} \\
    = &\ 
    2n^2 + 4 \sum_{w}[\text{\# of vertices at distance 3 or 4 from $w$}] \\
    < &\ 
    2n^2 + 4 \sum_{w}\deg(w) \cdot \frac{5\alpha^{3}}{\alpha-\lfloor\alpha\rfloor}
    \,.
  \end{align*}
  The degree sum is precisely twice the total number of edges in the graph,
  a quantity which we just bounded above.  Putting everything together, the
  total sum of distances is at most:
  \begin{displaymath}
    2n^2 + 8 n^{3/2} \sqrt{\log n} 
    \cdot \frac{7\alpha^{3}}{\alpha-\lfloor\alpha\rfloor} 
    \cdot \frac{5\alpha^{3}}{\alpha-\lfloor\alpha\rfloor}
    =
    2n^2 + n^{3/2} \sqrt{\log n} \cdot 
    \frac{280\alpha^{6}}{(\alpha-\lfloor\alpha\rfloor)^{2}} \,.
  \end{displaymath}
  Adding $\alpha$ times the number of edges to compute the total social
  cost, we obtain the desired bound.
\end{proof}

Lemmas \ref{lem:if-degrees-big} and \ref{lem:if-not} cover complementary
cases, so we now conclude that the total social cost of every Nash
equilibrium is at most the bound obtained in Lemma \ref{lem:if-not}.  As
was observed by previous authors \cite{FaLuMaPaSh}, the social optimum for
$\alpha \geq 2$ is the star, achieving a social cost of at least $2n(n-1)$.
Dividing, we find that the price of anarchy is at most
\begin{displaymath}
  1 + \frac{150\alpha^{6}}{(\alpha-\lfloor\alpha\rfloor)^{2}}
  \sqrt{\frac{\log n}{n}} 
  =1+o(1) \,,
\end{displaymath}
proving the first part of Theorem \ref{thm:main}.

\section{Integral $\boldsymbol{\alpha}$}

There is one catch in our bound above. Namely, when $\alpha$ is only
slightly greater than an integer (e.g. 4.0001), the terms of the form
$\frac{\star}{\alpha-\lfloor\alpha\rfloor}$ all blow up, giving the final
$o(n^{2})$ terms for our bound a large constant factor. Even worse, when
$\alpha$ is an exact integer, the proof fails completely.  Perhaps
surprisingly, this is not an artifact of the proof.  In this section, we
construct a counterexample when $\alpha$ is an integer.  Let
$v_{1},v_{2},\ldots,v_{k}$ be a large clique with edges oriented
arbitrarily. In addition, each vertex $v_{i}$ in the clique also pays for
edges to $\alpha-1$ separate leaves
$l_{i:1},l_{i:2},\ldots,l_{i:\alpha-1}$.  This graph also appears in
\cite{AlEiEvMaRo}, but we provide a full analysis here for completeness.

\begin{lemma}
  In this graph, no single vertex has a better strategy than the one it's
  currently using.
  \label{lem:Z-eq}
\end{lemma}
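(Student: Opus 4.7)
The plan is to verify the Nash condition by splitting on vertex type: the graph has clique vertices $v_1,\ldots,v_k$ and leaves $l_{i:j}$, and I will show that for each, no unilateral deviation strictly decreases cost. Since any alternative strategy differs from the current one by some set of edge additions and deletions, the whole argument reduces to accounting the $\alpha$ paid per edge against the exact change in $\sum_w \dist(\cdot,w)$.

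For a leaf $l_{i:j}$, note that it pays for no edges originally, so every deviation is an addition of some set of edges. Its baseline distances are $1$ to the parent $v_i$, $2$ to every other clique vertex and to every same-parent sibling, and $3$ to every leaf of another clique vertex. The key marginal computation is that an added edge to a clique vertex $v_m$ (with $m\neq i$) saves exactly $\alpha$ in the distance sum (one for $v_m$ itself, and one for each of its $\alpha-1$ leaves), exactly cancelling the $\alpha$ edge cost, while an added edge to another leaf saves at most $2$ in the distance sum and costs $\alpha \geq 2$. Summing over any combination of added edges then gives a nonnegative net change.

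For a clique vertex $v_i$, first note that $v_i$ must retain its edges to all $\alpha - 1$ of its own leaves, since otherwise a leaf becomes disconnected and the cost becomes infinite. The remaining deviation is parametrized by the set $A$ of clique vertices $v_i$ pays for and the set $C$ of other clique vertices' leaves it pays for. Writing $T = A \cup \{v_m : m \in I\}$ for $v_i$'s clique neighbors after deviation (with $I$ the fixed set of clique vertices paying for edges into $v_i$), I will compute $\cost(v_i)$ directly in the case $T \neq \emptyset$. A careful accounting should reduce it to
\[
  \cost(v_i) = (\alpha - 2)\,|C| + c_T + \text{const},
\]
where $c_T$ counts leaves in $C$ whose parent lies in $T$, and const depends only on $\alpha$, $k$, and $|I|$. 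Since $(\alpha-2)|C|+c_T\geq 0$, the current strategy (with $C = \emptyset$) is optimal; moreover the cost is independent of $A$, because each clique edge is exactly breakeven (its $\alpha$ cost is matched by $\alpha$ in distance savings, one to the far endpoint plus one to each of its $\alpha-1$ leaves).

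The main obstacle, and the only delicate step, is the edge case $T = \emptyset$, which can occur only when $I = \emptyset$ (all clique edges at $v_i$ are outgoing) and $v_i$ drops every outgoing clique edge. Here $v_i$ must route through the leaves in $C$, inflating clique distances to $2$ or $3$ and other-leaf distances to $3$ or $4$. I will handle this case by a separate direct computation, which should show that the resulting total cost exceeds the original by $(\alpha-2)(k-1)\geq 0$, so no strict improvement is possible here either. Once the bookkeeping in the $T\neq\emptyset$ case is set up correctly, the remainder of the verification is mechanical.
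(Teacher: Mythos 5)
Your proposal is correct but takes a genuinely different route from the paper's. The paper argues structurally: for a leaf it invokes a convexity principle (if adding a set of edges is profitable, then so is adding one of them alone), reducing the check to a single added edge; for a clique vertex it first rules out buying any foreign leaf edge by an exchange argument (any edge to $l_{m:j}$ can be rerouted to the parent $v_m$ without loss), reducing the problem to deletions of clique edges, which are again handled by convexity and a single-edge check. You instead write down the deviating cost as an explicit function of the deviation parameters and minimize it directly. I checked your formulas and they hold up: for $T \neq \emptyset$, taking $A$ disjoint from $\{v_m : m\in I\}$ (harmless, since a double edge is pure waste), the cost does simplify to $(\alpha-2)|C| + c_T$ plus a constant independent of $A$; and in the $T=\emptyset$ case the excess over the current cost works out to $(\alpha-2)|C| - \alpha p + \alpha(k-1)$, where $p$ is the number of distinct parents of leaves in $C$, which is minimized at $|C|=p=k-1$ with value exactly $(\alpha-2)(k-1)$. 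So your quoted figure is tight, but it is the \emph{minimum}; you should say the excess is at least $(\alpha-2)(k-1)$. One small point to make explicit in the leaf case: "summing over any combination of added edges" needs that distance savings are subadditive, which holds because any shortest path from $w$ in the augmented graph uses at most one of $w$'s newly bought edges (they all share the endpoint $w$) — this is precisely what the paper packages as its convexity claim. The paper's route is shorter and avoids writing down the full cost formula; yours is heavier on bookkeeping but delivers explicit tight excess bounds as a byproduct.
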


\begin{proof}
  First, consider any leaf, say $l_{1:1}$.  This leaf is not currently
  paying for any edges, so its only option is to pay for some set of edges.
  Notice that purely choosing some set of edges to pay for, without being
  able to delete any edges, is an instance of convex optimization.
  Therefore, by convexity, if any vertex in any graph can improve its
  station purely by adding some set of edges $S$, then it can also do this
  by adding some single edge $s\in S$. By observation, the leaf can only
  break even by adding one edge, so it can only break even overall.

  Next, consider a members of the clique, say $v_{1}$. This vertex cannot
  delete its connections to its leaves, because that would disconnect the
  graph, making the distance component of its cost infinite.  If $v_{1}$
  remains neighbors with $v_{i}$ and also buys an edge to some leaf
  $v_{i:j}$, then this is suboptimal: the edge to $v_{i:j}$ costs $\alpha$
  but only gets $v$ closer to one vertex. If $v_{1}$ deletes its edge to
  $v_{i}$ but buys an edge to some leaf $v_{i:j}$, this is unnecessary:
  $v_{1}$ can move the edge from $v_{i:j}$ to $v_{i}$, switching its
  distances to those two vertices and not increasing the distance to any
  other vertex. Therefore, it is unnecessary for $v_{1}$ to consider
  strategies involving connecting to other vertices' leaves.

  Thus, similarly to the previous case, $v_{1}$ only needs to consider
  strategies involving purely deleting edges. Again, by convexity, this
  reduces to considering strategies involving deleting a single edge. But
  again, $v_{1}$ can only break even by deleting an edge, so it can only
  break even overall.
\end{proof}

Therefore, the graph is indeed a weak Nash equilibrium. Let $n$ be the
number of vertices in the graph.  The size of the clique is $k =
\frac{n}{\alpha}$, and so the cost of all of the edges is
\begin{displaymath}
  \alpha\left[\binom{k}{2} + (\alpha-1)k\right]
  =
  \alpha\left[ (1+o(1)) \frac{n^2}{2\alpha^2} + \frac{\alpha-1}{\alpha} n  \right]
  =
  (1+o(1)) \frac{n^2}{2\alpha} \,.
\end{displaymath}
Every clique vertex is distance 1 from the rest of the clique, as well as
its leaves, and distance 2 from every other vertex; therefore, each clique
vertex sees a distance sum of
\begin{displaymath}
  (1+o(1))n \left( 2 - \frac{1}{\alpha} \right) \,.
\end{displaymath}
Since there are $\frac{n}{\alpha}$ clique vertices, these contribute a
total of
\begin{displaymath}
  (1+o(1)) n^2 
  \left( \frac{2}{\alpha} - \frac{1}{\alpha^2} \right) \,.
\end{displaymath}
Every leaf vertex is distance 2 from almost all of the clique, and distance
3 from almost all of the leaves, and so it sees a distance sum of
\begin{displaymath}
  (1+o(1)) n \left( 3 - \frac{1}{\alpha} \right) \,.
\end{displaymath}
Since there are $n(1-\frac{1}{\alpha})$ leaves, these contribute a total
distance sum of
\begin{displaymath}
  (1+o(1)) n^2 \left( 3 - \frac{4}{\alpha} + \frac{3}{\alpha^2} \right) \,.
\end{displaymath}
Putting everything together, we find that this graph has a total social
cost of
\begin{displaymath}
  (1+o(1)) n^2 \left( 3 - \frac{3}{2\alpha} + \frac{2}{\alpha^2}  \right)
  \,,
\end{displaymath}
giving a price of anarchy at least $\frac{3}{2} - \frac{3}{4\alpha} +
\frac{1}{\alpha^2} +o(1)$, as claimed in the second part of Theorem
\ref{thm:main}.

\section{Concluding remarks}

It is interesting that the price of anarchy converges to 1 for non-integral
$\alpha>2$, but is bounded away from 1 for integer $\alpha\geq2$.
Our convergence rate is non-uniform in the sense that it slows down
substantially when $\alpha$ is slightly more than an integer. On
the other hand, when $\alpha$ is slightly less than an integer, the
convergence rate is still relatively rapid. It would be nice to prove
a uniform convergence rate for all non-integral $\alpha$.


\bibliographystyle{acm}
\bibliography{network-game}

\begin{thebibliography}{10}

\bibitem{Al}
{\sc Albers, S.}
\newblock On the value of coordination in network design.
\newblock In {\em Proc. 19th ACM-SIAM Symposium on Discrete Algorithms\/}
  (Philadelphia, PA, USA, 2008), SODA '08, Society for Industrial and Applied
  Mathematics, pp.~294--303.

\bibitem{AlEiEvMaRo}
{\sc Albers, S., Eilts, S., Even-dar, E., Mansour, Y., and Roditty, L.}
\newblock On {N}ash equilibria for a network creation game.
\newblock In {\em Proc. 16th ACM-SIAM Symposium on Discrete Algorithms\/}
  (2006), SODA '06, pp.~89--98.

\bibitem{AlDeHaLe}
{\sc Alon, N., Demaine, E.~D., Hajiaghayi, M., and Leighton, T.}
\newblock Basic network creation games.
\newblock In {\em Proc. 22nd ACM Symposium on Parallelism in Algorithms and
  Architectures\/} (New York, NY, USA, 2010), SPAA '10, ACM, pp.~106--113.

\bibitem{AnFeMa}
{\sc Andelman, N., Feldman, M., and Mansour, Y.}
\newblock Strong price of anarchy.
\newblock In {\em Proc. 18th ACM-SIAM Symposium on Discrete Algorithms\/}
  (Philadelphia, PA, USA, 2007), SODA '07, Society for Industrial and Applied
  Mathematics, pp.~189--198.

\bibitem{AnDaKlTaWeRo}
{\sc Anshelevich, E., Dasgupta, A., Kleinberg, J., Tardos, E., Wexler, T., and
  Roughgarden, T.}
\newblock The price of stability for network design with fair cost allocation.
\newblock In {\em Proc. 45th IEEE Symposium on Foundations of Computer
  Science\/} (Washington, DC, USA, 2004), FOCS '04, IEEE Computer Society,
  pp.~295--304.

\bibitem{AnDaTaWe}
{\sc Anshelevich, E., Dasgupta, A., Tardos, E., and Wexler, T.}
\newblock Near-optimal network design with selfish agents.
\newblock In {\em Proc. 35th ACM Symposium on Theory of Computing\/} (New York,
  NY, USA, 2003), STOC '03, ACM, pp.~511--520.

\bibitem{CoPa}
{\sc Corbo, J., and Parkes, D.}
\newblock The price of selfish behavior in bilateral network formation.
\newblock In {\em Proc. 24th ACM Symposium on Principles of Distributed
  Computing\/} (New York, NY, USA, 2005), PODC '05, ACM, pp.~99--107.

\bibitem{CzVo}
{\sc Czumaj, A., and V\"ocking, B.}
\newblock Tight bounds for worst-case equilibria.
\newblock In {\em Proc. 13th ACM-SIAM Symposium on Discrete Algorithms\/}
  (2002), SODA '02.

\bibitem{DeHaMaZa}
{\sc Demaine, E.~D., Hajiaghayi, M., Mahini, H., and Zadimoghaddam, M.}
\newblock The price of anarchy in network creation games.
\newblock {\em ACM Transactions on Algorithms 8}, 2 (April 2012), Paper 13.

\bibitem{FaLuMaPaSh}
{\sc Fabrikant, A., Luthra, A., Maneva, E., Papadimitriou, C.~H., and Shenker,
  S.}
\newblock On a network creation game.
\newblock In {\em Proc. 22nd Symposium on Principles of Distributed
  Computing\/} (2003), PODC '03, pp.~347--351.

\bibitem{Ja}
{\sc Jackson, M.~O.}
\newblock A survey of network formation models: stability and efficiency.
\newblock In {\em Group Formation in Economics; Networks, Clubs, and
  Coalitions}, G.~Demange and M.~Wooders, Eds. Cambridge University Press,
  2005.

\bibitem{KoPa}
{\sc Koutsoupias, E., and Papadimitriou, C.}
\newblock Worst-case equilibria.
\newblock In {\em Proc. 16th Symposium on Theoretical Aspects of Computer
  Science\/} (Berlin, Heidelberg, 1999), STACS'99, Springer-Verlag,
  pp.~404--413.

\bibitem{PaSTOC01}
{\sc Papadimitriou, C.}
\newblock Algorithms, games, and the internet.
\newblock In {\em Proc. 33rd ACM Symposium on Theory of Computing\/} (New York,
  NY, USA, 2001), STOC '01, ACM, pp.~749--753.

\bibitem{RoSTOC02}
{\sc Roughgarden, T.}
\newblock The price of anarchy is independent of the network topology.
\newblock In {\em Proc. 34th ACM Symposium on Theory of Computing\/} (2002),
  STOC '02, pp.~428--437.

\bibitem{RoBOOK}
{\sc Roughgarden, T.}
\newblock {\em Selfish Routing and the Price of Anarchy}.
\newblock MIT Press, 2005.

\bibitem{TaWe}
{\sc Tardos, E., and Wexler, T.}
\newblock Network formation games and the potential function method.
\newblock In {\em Algorithmic Game Theory}, N.~Nisan, T.~Roughgarden,
  E.~Tardos, and V.~V. Vazirani, Eds. Cambridge University Press, 2007, ch.~19,
  pp.~487--516.

\end{thebibliography}

\end{document}